\theoremstyle{plain} 
\newtheorem{theorem}[equation]{Theorem}
\theoremstyle{definition}
\newtheorem{definition}[equation]{Definition} 
\newtheorem{example}[equation]{Example}
\theoremstyle{remark}
\newtheorem{remark}[equation]{Remark}
\numberwithin{equation}{section}
\title{Chebyshev-Type Quadrature Formulas for New Weight Classes }
\author[ A.~Vagharshakyan ] {Armen Vagharshakyan }
\address{\newline Mathematics Department, \newline Brown University, \newline 151 Thayer St, \newline Providence, RI 02912 USA}
\email{armen@math.brown.edu}
\begin{document}

\begin{abstract}
We give Chebyshev-type quadrature formulas for certain new weight classes.  These formulas are of highest possible degree when the number of nodes is a power of 2. We also describe the  nodes in a constructive way, which is important for applications. One of our motivations to consider these type of problems is the Faraday cage phenomenon for discrete charges as discussed by J. Korevaar and his colleagues.
\end{abstract}
\maketitle
\textit{Keywords:} Chebyshev quadrature, Faraday cage phenomenon

\textit{AMS subject classification:} 65D32 (primary),  78A30 (secondary)

\section{\textbf{Introduction}}

	Let us denote by $Pol(m)$ the family of polynomials of degree $\leq m$.

\begin{definition} Let $\rho(x)$ be 
a nonnegative function on $[-1,1]$. Denote by:
 \begin{equation*}M_n(\rho)\end{equation*} 
the biggest natural number for which there exists a choice of distinct points $\lbrace x_k\rbrace_{k=1}^n\subset [-1,1]$ so that the equality:
\begin{equation}\label{e.chebyshevquadrature}
\int_{-1}^1 P(x)\rho(x)dx=\frac{1}{n}\sum_{k=1}^nP(x_k)
\end{equation}
is valid for an arbitrary polynomial $P\in Pol(M_n(\rho))$.
\end{definition}

\begin{remark}
	A quadrature formula of the form \eqref{e.chebyshevquadrature} is called Chebyshev-type quadrature formula, and the points $\lbrace x_k\rbrace_{k=1}^n\subset [-1,1]$ are called the nodes of the quadrature formula. One can consider quadrature formulas of a more general form:
\begin{equation}\label{e.gaussquadrature}
\int_{-1}^1 P(x)\rho(x)dx=\frac{1}{n}\sum_{k=1}^nA_kP(x_k)
\end{equation}
where $A_k$ are certain nonnegative numbers. These are known as Gauss type quadrature formulas. 
\end{remark}
	It is well known (see \cite{Krilov}, p. 97) that a Gauss type quadrature formula with $n$ nodes cannot be valid for an arbitrary $P\in Pol(2n)$. For completeness, we provide the proof of that fact here:

\begin{remark}\label{r.nottoobig}
Let $\rho(x)\geq 0$ be a nontrivial function on $ -1\leq x\leq 1$ . Then there are no points 
$x_k\in [-1,1],\,\, k=1,\dots,n$ and numbers $A_k,\,\, k=1,2,\dots,n$ such that the formula \eqref{e.gaussquadrature}
is valid for an arbitrary $P\in Pol(2n)$.
\end{remark}
\begin{proof}
Assume the converse, for some points 
$x_k\in [-1,1],\,\, k=1,2,\dots,n$ and numbers $A_k,\,\, k=1,2,\dots,n$
the formula \eqref{e.gaussquadrature} is true. Then for the polynomial:
\begin{equation*}
P_0(x)=\prod_{k=1}^{n}(x-x_k)^2
\end{equation*}
of degree $2n$ we must have:
\begin{equation*}
0<\int_{-1}^1 P_0(x)\rho(x)dx=\frac{1}{n}\sum_{k=1}^nA_kP(x_k)=0.
\end{equation*}
\end{proof}
\begin{remark}\label{r.nottoobig2}
	Remark \eqref{r.nottoobig} implies that $M_n(\rho)<2n$.
\end{remark}
	The first Chebyshev-type formula was first discovered by Mehler in 1864 (see \cite{Krilov}, p. 185):
\begin{theorem}[Mehler] The formula
\begin{equation*}
\frac{1}{\pi}\int_{-1}^1\frac{P(x)}{\sqrt{1-x^2}}dx=
\frac{1}{n}\sum_{k=1}^nP\left(\cos\frac{2k-1}{2n}\pi\right)
\end{equation*}
is valid for an arbitrary polynomial $P\in Pol(2n-1)$.
\end{theorem}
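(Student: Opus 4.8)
The plan is to exploit linearity: since both sides of the claimed identity are linear functionals of $P$, it suffices to verify the equality on a spanning set of $Pol(2n-1)$. The natural choice, dictated by the weight $\rho(x)=\frac{1}{\pi\sqrt{1-x^2}}$, is the family of Chebyshev polynomials of the first kind $\{T_j\}_{j=0}^{2n-1}$, defined by $T_j(\cos\theta)=\cos(j\theta)$; these clearly span $Pol(2n-1)$. So I would reduce the theorem to showing that, for each $0\le j\le 2n-1$, the left-hand side and the right-hand side agree when $P=T_j$.

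For the left-hand side I would substitute $x=\cos\theta$, so that $\sqrt{1-x^2}=\sin\theta$ and $dx=-\sin\theta\,d\theta$ for $\theta\in[0,\pi]$. The weight singularity cancels and the integral collapses to $\frac{1}{\pi}\int_0^\pi\cos(j\theta)\,d\theta$, which equals $1$ when $j=0$ and $0$ when $1\le j\le 2n-1$.

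For the right-hand side, the nodes $x_k=\cos\frac{(2k-1)\pi}{2n}$ give $T_j(x_k)=\cos\frac{j(2k-1)\pi}{2n}$, so I must evaluate the trigonometric sum $\frac{1}{n}\sum_{k=1}^n\cos\frac{j(2k-1)\pi}{2n}$. When $j=0$ every term equals $1$ and the sum is $1$, matching the integral. When $1\le j\le 2n-1$ I would write the sum as the real part of a geometric series $\sum_{k=1}^n e^{ij(2k-1)\pi/(2n)}$ with ratio $e^{ij\pi/n}\ne 1$, and sum it in closed form. The numerator then carries the factor $e^{ij\pi}-1=(-1)^j-1$, which vanishes outright for even $j$, while for odd $j$ the closed form simplifies to a purely imaginary quantity, so that its real part is again $0$.

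The computation is elementary, so I do not expect a deep obstacle; the one point requiring care is the case analysis in the geometric-series step. One must check that the ratio $e^{ij\pi/n}$ is never $1$ in the range $1\le j\le 2n-1$ (equality would force $j$ to be a multiple of $2n$), so that the closed-form summation is legitimate, and then track the even/odd split to confirm that the real part vanishes in each subcase. Assembling the two evaluations and invoking linearity then yields the identity on all of $Pol(2n-1)$.
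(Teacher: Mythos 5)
The paper itself offers no proof of Mehler's theorem: it is stated as a quoted classical result, with a citation to Krilov's book (p.~185), so there is no internal argument to compare yours against. Judged on its own, your proof is correct and complete in all essentials. Both sides are linear in $P$, and the Chebyshev polynomials $T_0,\dots,T_{2n-1}$ form a basis of $Pol(2n-1)$ (their degrees are $0,1,\dots,2n-1$), so checking the identity on them suffices. The substitution $x=\cos\theta$ indeed gives $\frac{1}{\pi}\int_{-1}^1\frac{T_j(x)}{\sqrt{1-x^2}}\,dx=\frac{1}{\pi}\int_0^\pi\cos(j\theta)\,d\theta$, which is $1$ for $j=0$ and $0$ for $1\le j\le 2n-1$. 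On the discrete side, with $\omega=e^{ij\pi/(2n)}$ one has $\sum_{k=1}^n\omega^{2k-1}=\omega\frac{\omega^{2n}-1}{\omega^2-1}$, legitimate because $\omega^2=e^{ij\pi/n}\ne 1$ precisely when $j$ is not a multiple of $2n$, as you note; the numerator carries $(-1)^j-1$, killing the even case, and for odd $j$ the sum equals $\frac{-2}{\omega-\omega^{-1}}=\frac{i}{\sin\left(j\pi/(2n)\right)}$, which is purely imaginary (the sine is nonzero since $0<j\pi/(2n)<\pi$), so the real part vanishes. This matches the integral in every case, and linearity finishes the argument. The one refinement worth recording explicitly in a written version is the last displayed identity, which makes the claim ``purely imaginary'' a one-line verification rather than an assertion.
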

On the other hand in 1875 K. Posse proved that (see \cite{Posse} and \cite{Krilov}, p.186):

\begin{theorem}\label{t.posse}(Posse) If for a weight function \begin{equation*}\rho(x),\,\,\, x\in[-1,1],\end{equation*} we have that  $M_n(\rho)= 2n-1$ for all $n\in N$ then:
\begin{equation}\label{e.chebyshevweight}
\rho(x)=\frac{1}{\pi\sqrt{1-x^2}}.
\end{equation}
\end{theorem}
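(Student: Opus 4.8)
The plan is to reduce the hypothesis to a statement about the orthogonal polynomials attached to $\rho$ and their Jacobi matrices, and then to read off the three-term recurrence coefficients. First, taking $P\equiv 1$ in \eqref{e.chebyshevquadrature} forces $\int_{-1}^1\rho\,dx=1$, so $\rho\,dx$ is a probability measure. Fix $n$, let $x_1,\dots,x_n$ be the nodes realizing $M_n(\rho)=2n-1$, and set $\omega(x)=\prod_{k=1}^n(x-x_k)$. For every $Q\in Pol(n-1)$ the product $\omega Q$ lies in $Pol(2n-1)$ and vanishes at all nodes, so \eqref{e.chebyshevquadrature} gives $\int_{-1}^1\omega(x)Q(x)\rho(x)\,dx=0$. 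Hence $\omega$ is orthogonal to $Pol(n-1)$ with respect to $\rho$, i.e. $\omega=\pi_n$, the monic orthogonal polynomial of degree $n$; in particular the nodes are its zeros and are determined by $\rho$. Thus the formula is the $n$-point Gauss--Christoffel rule, and the hypothesis that every weight equals $1/n$ says precisely that \emph{all Christoffel numbers of $\rho$ are equal, for every $n$}.

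Next I would encode ``all Christoffel numbers equal'' spectrally. Let $J_n$ be the $n\times n$ Jacobi matrix built from the recurrence $\pi_{k+1}(x)=(x-a_k)\pi_k(x)-b_k\pi_{k-1}(x)$, with diagonal $a_0,\dots,a_{n-1}$ and off-diagonal entries $\sqrt{b_1},\dots,\sqrt{b_{n-1}}$. Its eigenvalues are the nodes, and the $k$-th Christoffel number equals $|(\psi_k)_1|^2$, the squared first coordinate of the associated normalized eigenvector $\psi_k$ (Golub--Welsch). So equal Christoffel numbers $=1/n$ means $|(\psi_k)_1|^2=1/n$ for all $k$, and since $\langle e_1,f(J_n)e_1\rangle=\sum_k|(\psi_k)_1|^2 f(x_k)$ for every function $f$, this is equivalent to
\[
(J_n^{\,j})_{11}=\tfrac1n\,\mathrm{tr}\,J_n^{\,j}\qquad\text{for all }j\ge0\text{ and all }n.
\]

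The heart of the argument is to convert these trace identities into constraints on $a_k,b_k$. Taking $j=1$ gives $a_0=\frac1n\sum_{i=0}^{n-1}a_i$ for every $n$, and comparing the values for $n$ and $n+1$ yields $a_n=a_0$, so the diagonal is constant, $a_k\equiv a$. Taking $j=2$ (for $n\ge2$) and using $(J_n^2)_{11}=a^2+b_1$ together with $\mathrm{tr}\,J_n^2=na^2+2\sum_{l=1}^{n-1}b_l$ gives $b_1=\frac2n\sum_{l=1}^{n-1}b_l$; differencing successive $n$ forces $b_n=\tfrac12 b_1$ for all $n\ge2$. Thus the recurrence coefficients are constant from the second step on, which is exactly the shape of the coefficients of the arcsine (Chebyshev) weight. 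A constant-coefficient Jacobi matrix with this single boundary modification has essential spectrum $[a-2\sqrt b,\,a+2\sqrt b]$; imposing that $\rho$ occupies all of $[-1,1]$ forces $a=0$ and $b=\tfrac14$, hence $b_1=\tfrac12$, and these are precisely the recurrence coefficients of $\tfrac1{\pi\sqrt{1-x^2}}$. By the uniqueness of the measure with prescribed recurrence (Favard), $\rho(x)=\tfrac1{\pi\sqrt{1-x^2}}$, in agreement with Mehler's formula.

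I expect two points to be the main obstacles. The first is the clean spectral identity $\lambda_k=|(\psi_k)_1|^2$: proving it self-contained, rather than quoting Golub--Welsch, takes some orthogonal-polynomial bookkeeping. The second, and more conceptual, is normalization: the equal-weight condition by itself only pins the coefficients down to the two-parameter family of shifted and scaled arcsine weights $\tfrac1{\pi\sqrt{r^2-(x-a)^2}}$, each of which still satisfies $M_n=2n-1$. Consequently the stated conclusion genuinely needs the information that $\rho$ is supported on the full interval $[-1,1]$ (equivalently $\rho>0$ a.e. on $(-1,1)$), which is the input that singles out $a=0$, $r=1$ and removes the spurious scaling and translation freedom.
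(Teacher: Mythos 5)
The first thing to note is that the paper contains no proof of this theorem: it is quoted from Posse \cite{Posse} and Krilov \cite{Krilov} as background, so there is no in-paper argument to compare against, and your proposal must be judged on its own merits. Judged so, it is essentially correct, and it is a genuinely modern, spectral route (Jacobi matrices and trace identities) rather than the classical moment/continued-fraction arguments. Your reduction is right: exactness on $Pol(2n-1)$ forces $\omega(x)=\prod_k(x-x_k)$ to be orthogonal to $Pol(n-1)$, so $\omega=\pi_n$, the rule is the Gauss--Christoffel rule, and the hypothesis says all Christoffel numbers equal $1/n$ for every $n$; Golub--Welsch converts this to $(J_n^{\,j})_{11}=\frac1n\mathrm{tr}\,J_n^{\,j}$, and your $j=1,2$ differencing argument is valid precisely because $J_n$ is the upper-left corner of $J_{n+1}$ (the recurrence coefficients are intrinsic to $\rho$, not to $n$), yielding $a_k\equiv a$, $b_k\equiv b$ for $k\ge 2$, $b_1=2b$, which are the coefficients of a shifted and scaled arcsine weight.

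Two refinements. First, the step through essential spectra is the weakest link as written: a finite-rank perturbation argument controls only the essential spectrum, so you would still have to exclude isolated eigenvalues of the semi-infinite Jacobi operator (i.e., mass points of the measure) outside $[a-2\sqrt b,\,a+2\sqrt b]$. It is cleaner to invoke Favard/determinacy first: the coefficients are bounded, hence the measure they determine is unique and compactly supported, and since the shifted arcsine weight $\frac{1}{\pi\sqrt{r^2-(x-c)^2}}$ with $c=a$, $r=2\sqrt b$ has exactly these coefficients, $\rho\,dx$ \emph{is} that measure; only then impose the support condition to get $c=0$, $r=1$. Second, your closing observation is correct and worth stressing: as literally stated in the paper, with $\rho$ merely a nonnegative weight on $[-1,1]$, the theorem is false --- the arcsine density of any proper subinterval of $[-1,1]$ also satisfies $M_n(\rho)=2n-1$ for all $n$ --- so Posse's theorem tacitly requires that the support of $\rho$ be all of $[-1,1]$ (equivalently $\rho>0$ a.e.), and your proof correctly isolates the exact point where that hypothesis enters.
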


	The first weight function different from \eqref{e.chebyshevweight}
for which $M_n(\rho)\geq n$ was given 
by Ullman (see \cite{MR0205463}):
\begin{equation*}
\mu(x)=\frac{2}{\pi\sqrt{1-x^2}}\cdot\frac{1+bx}{1+b^2+2bx},\quad |b|<\frac{1}{2}
\end{equation*}
where
\begin{equation*}
M_n(\mu)=n.
\end{equation*}

	Later new examples of weights for whom $M_n(\rho)=n$ were found (see the survey article \cite{MR890266} and the references given there).
These examples of weight functions are mainly of the following form:
\begin{equation*}
\rho(x)=\frac{w(x)}{\sqrt{1-x^2}},
\end{equation*}
where $w(x)$ is a certain positive analytic function on $[-1,\,1]$.

	Examples for whom $w$ has a singularity at $0$ have been found, too (see \cite{MR1379132}).

	The main result of this article - theorem \eqref{t.main} provides Chebyshev-type quadrature formulas for certain new weight classes 
	${\bf W}_n$. 
	
	All these classes include not only the weight:
	\begin{equation}\label{e.chebyshev}
\rho(x)=\frac{1}{\pi\sqrt{1-x^2}}
\end{equation}
	(see remark \eqref{r.properinclusion}), but also many others (see remark \eqref{r.largeclass}).
		
	 Our result compares to the above-mentioned results by \eqref{r.comparison}. 
	 
	 The Chebyshev-type quadrature formulas for ${\bf W}_n$ are of highest possible degree when the number of nodes is a power of 2 (see remark \eqref{r.main2}). Thus, even though \eqref{e.chebyshev} is the only weight for which the Chabyshev type quadrature formula is of the highest possible degree $M_n(\rho)= 2n-1$ (see theorem \eqref{t.posse}), yet we prove that for every $n$ which is a power of 2 there are many more weights for whom there exists a Chebyshev-type quadrature formula of the highest possible degree. \newline
	
	Also note that the nodes in  theorem \eqref{t.main} are described in a constructive way, which is important for applications. 
\newline
\section{\textbf{The Case $\rho\equiv 1/2$} }
Let us discuss the case $\rho\equiv 1/2$ separately. S. Bernstein (see \cite{Krilov}, page 197) proved that if $n=8$ or $n\geq 10$ then one cannot choose points $x_{k,n},\,\, k=1,2,\dots, n$ in the segment $[-1,1]$ so that the following formula:
\begin{equation*}
\frac{1}{2}\int_{-1}^1P(x)dx=\frac{1}{n}\sum_{k=1}^nP(x_{k,n})
\end{equation*}
is valid for all polynomials of degree $n$.

	S. Bernstein (see \cite{Bernstein2}) proved the following inequality  $M_n(1/2)$:
\begin{equation*}
M_n(1/2)<\pi \sqrt{2n}.
\end{equation*}

	A. Kuijlaars \cite{MR1240246} (using the methods of S. Bernstein \cite{Bernstein1} ) proved that:

\begin{theorem}\label{t.bernstein} 
There exists an absolute constant $c>0$ such that:
\begin{equation*}
M_n(1/2)> c\sqrt{n}.
\end{equation*}
\end{theorem}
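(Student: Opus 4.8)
The plan is to recast exactness of the quadrature as a system of power-sum equations and then to \emph{construct} admissible nodes by correcting an explicit near-solution. Exactness of
\[
\frac{1}{2}\int_{-1}^1 P(x)\,dx=\frac{1}{n}\sum_{k=1}^n P(x_k)
\]
for all $P\in Pol(m)$ is equivalent to the moment conditions
\[
\frac{1}{n}\sum_{k=1}^n x_k^{\,j}=\frac{1}{2}\int_{-1}^1 x^{\,j}\,dx=\begin{cases}\frac{1}{j+1},& j \text{ even},\\ 0,& j \text{ odd},\end{cases}\qquad j=0,1,\dots,m,
\]
or, packaged in a generating function, to
\[
\frac{1}{n}\sum_{k=1}^n e^{t x_k}=\frac{\sinh t}{t}+O(t^{m+1})\qquad(t\to 0).
\]
First I would impose the symmetry $x_k\mapsto -x_k$, i.e. take the nodes in pairs $\pm y_i$ with $n=2N$. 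Every odd power sum then vanishes automatically, matching the odd moments for free, and the remaining even conditions reduce to finding $N$ numbers $u_i=y_i^2\in[0,1]$ with $\frac1N\sum_i u_i^{\,j}=\frac{1}{2j+1}$ for $j\le m/2$. This halves the number of conditions and reduces the problem to an equal-weight quadrature on a single interval.

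Next I would produce an explicit configuration whose moments are already \emph{almost} correct and estimate the defect. A natural choice is a midpoint- or Chebyshev-type grid, for which the empirical measure approximates $\tfrac12\,dx$ and, by an Euler--Maclaurin estimate, the error in the $j$-th moment is of size $O(j^2/n^2)$; thus all moments up to order $j\sim\sqrt n$ are matched to within $O(1/n)$. The remaining step is to correct the finitely many low moments \emph{exactly}. Viewing the power sums $(x_1,\dots,x_n)\mapsto\bigl(\tfrac1n\sum_k x_k^{\,j}\bigr)_{j\le m}$ as a map, I would enforce the corrections by an implicit-function/topological-degree argument: the relevant derivative is a generalized Vandermonde matrix in the nodes, which is invertible as long as the nodes stay distinct, and a small displacement of each node suffices to cancel the $O(1/n)$ defect. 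One must then check that the perturbed nodes remain real, distinct, and inside $[-1,1]$; this is guaranteed provided the required displacements stay below the spacing of the initial grid.

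The main obstacle is exactly this last point: as $m$ grows, both the moment defect of the starting grid and the conditioning of the Vandermonde correction deteriorate, and the displacement needed to enforce the low-order identities eventually exceeds the inter-node spacing, at which point reality or distinctness of the nodes is lost. Quantifying this trade-off is where the exponent $1/2$ enters, and it is sharp: by Bernstein's upper bound $M_n(1/2)<\pi\sqrt{2n}$ quoted above, no elementary improvement of the method can push the degree past order $\sqrt n$, so the construction is forced to stop at $m\asymp\sqrt n$. Equivalently---and this is the route of Bernstein used by Kuijlaars in \cite{MR1240246}---one may translate the approximation $\omega(z)=\prod_k(z-x_k)\approx\exp\bigl(n\!\int\log(z-x)\,\tfrac{dx}{2}\bigr)$ into a statement about the zeros of the partial sums of the exponential series, whose number of admissible real nodes in $[-1,1]$ is of order $\sqrt n$; this yields the absolute constant $c>0$ and completes the proof.
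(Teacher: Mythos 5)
Your plan stalls exactly at the step you yourself call ``the main obstacle,'' and nothing in the proposal overcomes it. The correction step needs more than \emph{invertibility} of the Jacobian of the power-sum map: it needs a quantitative lower bound on its smallest singular value, because the displacements it produces must stay below the grid spacing $\asymp 1/n$. In the monomial basis this Jacobian has entries $\tfrac{j}{n}x_k^{\,j-1}$, and its Gram matrix is approximately $\tfrac{1}{n}\bigl(\tfrac{jl}{j+l-1}\bigr)_{j,l\le m}$, a scaled Hilbert-type matrix whose least eigenvalue decays exponentially in $m$ (at rate $(1+\sqrt{2})^{-4m}$, by Gautschi's classical estimates). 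So the inverse you invoke has norm growing like $C^{m}$, and a defect of size $O(1/n)$ in each of $m$ moments can force displacements of size $C^{m}/\mathrm{poly}(n)$: the implicit-function correction closes only for $m=O(\log n)$, far short of $m\asymp\sqrt{n}$, unless you prove the defect vector avoids the ill-conditioned directions --- which you do not. The two devices you use to bridge this are both invalid: citing Bernstein's \emph{upper} bound $M_n(1/2)<\pi\sqrt{2n}$ cannot certify that a lower-bound construction succeeds up to order $\sqrt{n}$ (it only says nothing better is possible); and the closing sentence simply names ``the route of Bernstein used by Kuijlaars,'' i.e.\ it appeals to the literature for precisely the statement being proved. (The description of that route via ``zeros of the partial sums of the exponential series'' is also not what Bernstein's method does.)

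For comparison: the paper itself offers no proof of Theorem \ref{t.bernstein} --- it quotes the result from Kuijlaars \cite{MR1240246}, who executes Bernstein's method \cite{Bernstein1} --- and that method is structurally different from perturbing a uniform grid. In outline, one starts from the $N$-point Gauss--Legendre formula for $\tfrac12\,dx$, which is exact on $Pol(2N-1)$ and has \emph{positive} weights $\lambda_k$ with $\min_k\lambda_k\asymp N^{-2}$; each node $x_k$ is then replaced by a cluster of roughly $n\lambda_k$ equally weighted nearby points, positioned so that the cluster reproduces the mass $\lambda_k$ at $x_k$ up to the required degree, with the rounding errors absorbed by small shifts of the clusters. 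The integrality constraint $n\lambda_k\gtrsim 1$, combined with $\min_k\lambda_k\asymp N^{-2}$, is precisely what forces --- and permits --- $N\asymp\sqrt{n}$, and the positivity of the Gaussian weights is what keeps all nodes real, distinct, and inside $[-1,1]$. It is this structural input from Gaussian quadrature, not a generic Vandermonde correction (whose conditioning is exponentially bad), that produces the exponent $1/2$; without it your argument, as written, proves at best a logarithmic lower bound.
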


	Today the interest into Chebyshev-type quadrature formulas 
is explained not only by numerous nontrivial conjectures relating to the issue (see \cite{MR1379132},\cite{MR1421438} for a list of open problems) but also
	a connection between the Faraday cage phenomenon for discrete charges and  
Chebyshev-type quadrature formulas, as explained by J. Korevaar and his colleagues (see \cite{MR1473445}). 
For a more detailed explanation we refer the reader to the article \cite{MR1473445}.

This problem is also related to the discrepancy theory (see \cite{MR1439152}).
\newline
\section{\textbf{Auxiliary Constructions}}
	Let us denote by $x_0=-1$ and
\begin{equation*}
x_{n+1}=\sqrt{\frac{1+x_n}{2}},\,\,\, n=0,1,2,\dots
\end{equation*}
This sequence is increasing and
\begin{equation*}
\lim_{n\to \infty}x_n=1.
\end{equation*}
\begin{definition}\label{d.polynomial}
 For an arbitrary natural $n$ let us denote by $P_n(x)$ the 
polynomial of degree $n$, defined in the following way:
\newline
1. we put:
\begin{equation*}
P_0(x)=1,\quad P_{2}(x)=2x^2-1,
\end{equation*}
2. for an odd number $q$ let:
\begin{equation*}
P_q(x)=x^q,
\end{equation*}
3. for a natural number $n=2^p$ let:
\begin{equation*}
P_{2^p}(x)=P_{2}\left(P_{2^{p-1}}(x)\right),
\end{equation*}
4. For an arbitrary natural number $n=2^p q$, where $q=1(mod 2)$ and $3\leq q$ let:
\begin{equation*}
P_n(x)=\left(P_{2^p}(x)\right)^q.
\end{equation*}
For $n=2^p q$, where $q=1(mod 2)$ denote:
\begin{equation*}
|P_n|=p.
\end{equation*}
\end{definition}
\begin{example}
 We have:
\begin{equation*}
P_0(x)=1,\quad P_1(x)=x,\quad P_2(x)=2x^2-1,\quad P_3(x)=x^3,
\end{equation*}\begin{equation*}
P_4(x)=2\left(2x^2-1\right)^2-1,\quad P_5(x)=x^5,\quad P_6(x)=\left(2x^2-1\right)^3,
\end{equation*}
\begin{equation*}
P_7(x)=x^7,\quad P_8(x)=2\left(2\left(2x^2-1\right)^2-1\right)^2-1
\end{equation*} 
\end{example}
\begin{theorem} For an arbitrary natural $n$ we have:
\begin{equation*}
\max_{0\leq k\leq n}|P_k|=[\log_2 n].
\end{equation*}
\end{theorem}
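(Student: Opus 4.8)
The plan is to recognize that the quantity $|P_k|$ has nothing to do with the polynomials themselves beyond their indexing: by the defining convention, for $k=2^p q$ with $q$ odd one sets $|P_k|=p$, so $|P_k|$ is simply the exponent of the largest power of $2$ dividing $k$, i.e. the $2$-adic valuation of $k$. Thus the theorem is the purely arithmetic statement that $\max_{0\le k\le n}|P_k|=[\log_2 n]$, and I would prove it by establishing matching lower and upper bounds. For the boundary index $k=0$, the polynomial $P_0=1$ carries no factor of the doubling map $P_2$, so $|P_0|=0$, which is harmless since $[\log_2 n]\ge 0$ for every $n\ge 1$.

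For the lower bound, write $m=[\log_2 n]$, so that $2^m\le n<2^{m+1}$. Then the index $k=2^m$ satisfies $1\le k\le n$, and from rule~3 of Definition~\eqref{d.polynomial} (equivalently, from $2^m=2^m\cdot 1$ with odd part equal to $1$) we get $|P_{2^m}|=m$. Hence $\max_{0\le k\le n}|P_k|\ge m$.

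For the upper bound, take any $k$ with $1\le k\le n$ and put $j=|P_k|$, so that $2^j$ divides $k$. Then $2^j\le k\le n$; taking logarithms gives $j\le\log_2 n$, and since $j$ is an integer this forces $j\le[\log_2 n]=m$. Therefore every admissible index contributes at most $m$, so $\max_{0\le k\le n}|P_k|\le m$. Combining the two bounds yields $\max_{0\le k\le n}|P_k|=m=[\log_2 n]$.

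I do not expect any genuine obstacle here: the only points requiring a line of care are the treatment of the boundary index $k=0$ and the passage from the real inequality $j\le\log_2 n$ to the integer inequality $j\le[\log_2 n]$, which is legitimate precisely because $j=|P_k|$ is an integer. The entire content of the theorem is the elementary fact that among the integers $1,\dots,n$ the highest power of $2$ dividing any of them is $2^{[\log_2 n]}$, attained at the single index $k=2^{[\log_2 n]}$.
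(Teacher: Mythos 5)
Your proposal is correct and takes essentially the same approach as the paper: both identify $|P_k|$ as the $2$-adic valuation of $k$ and reduce the theorem to the arithmetic fact that the largest power of $2$ dividing some integer in $\{1,\dots,n\}$ is $2^{[\log_2 n]}$. The paper simply asserts this fact in one line, whereas you supply the (elementary) matching lower and upper bounds and the treatment of $k=0$; this is a fleshed-out version of the same argument, not a different route.
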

\begin{proof}
 We present each number $1\leq k\leq n$ n the form 
$k=2^{p_k} q_{k}$, where $q_k=1(mod 2)$. Then:
\begin{equation*}
\max_{0\leq k\leq n}|P_k|=\max_{0\leq k\leq n}|p_k|=[\log_2 n].
\end{equation*}
\end{proof}

\begin{example}
 We have:
\begin{equation*}
|P_{2^k}|=k,\quad |P_{2^k-1}|=k-1.
\end{equation*}
\end{example}
\begin{theorem} For an arbitrary natural $1\leq p$ we have:
\begin{equation*}
P_{2^p}(x_{p})=-1,\quad P_{2^p}(x_{p+1})=0,\quad P_{2^p}(1)=1.
\end{equation*}
\end{theorem}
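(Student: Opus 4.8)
The plan is to recognize the hidden trigonometric structure behind both the points $x_n$ and the polynomials $P_{2^p}$. The key observation is that the map $x\mapsto P_2(x)=2x^2-1$ is exactly the doubling map in the cosine variable: setting $x=\cos\theta$ gives $P_2(\cos\theta)=2\cos^2\theta-1=\cos 2\theta$. Since $P_{2^p}$ is by definition the $p$-fold iterate of $P_2$, a straightforward induction on $p$ should yield the identity $P_{2^p}(\cos\theta)=\cos(2^p\theta)$ for every real $\theta$. The base case $p=1$ is the computation just made, and the inductive step uses $P_{2^{p}}(x)=P_2(P_{2^{p-1}}(x))$ together with the induction hypothesis: $P_{2^{p}}(\cos\theta)=P_2(\cos(2^{p-1}\theta))=\cos(2^{p}\theta)$.

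Next I would pin down the points $x_n$ in closed form. Here the recursion $x_{n+1}=\sqrt{(1+x_n)/2}$ is precisely the half-angle formula: for $\theta\in[0,\pi]$ one has $\cos(\theta/2)=\sqrt{(1+\cos\theta)/2}$, with the nonnegative square root being the correct one because $\theta/2\in[0,\pi/2]$. Starting from $x_0=-1=\cos\pi$, an induction then gives $x_n=\cos(\pi/2^n)$; the inductive step is $x_{n+1}=\sqrt{(1+\cos(\pi/2^n))/2}=\cos(\pi/2^{n+1})$, where at each stage $\pi/2^n\in(0,\pi]$ guarantees that the positive root is the right branch.

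With these two facts in hand, the three desired evaluations follow by direct substitution. For the first, $P_{2^p}(x_p)=\cos(2^p\cdot\pi/2^p)=\cos\pi=-1$; for the second, $P_{2^p}(x_{p+1})=\cos(2^p\cdot\pi/2^{p+1})=\cos(\pi/2)=0$; and for the third, writing $1=\cos 0$ gives $P_{2^p}(1)=\cos(2^p\cdot 0)=\cos 0=1$.

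The only genuine obstacle is spotting the substitution $x=\cos\theta$, which simultaneously linearizes the polynomial recursion (through angle doubling) and the point recursion (through angle halving); once that is recognized, every remaining step is an elementary identity and the two inductions are routine. The one detail that merits a line of care is the sign of the square root in the definition of $x_{n+1}$, but since all the angles $\pi/2^n$ lie in $(0,\pi]$ the positive root is always correct, so no case distinction arises.
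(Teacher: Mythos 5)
Your proof is correct. For the record, the paper states this theorem without any proof (it is treated as evident from the constructions), so there is no written argument to compare against; what can be compared is the amount of machinery used. Your substitution $x=\cos\theta$ identifies $P_{2^p}$ with the Chebyshev polynomial $\cos(2^p\arccos x)$ and the nodes as $x_n=\cos(\pi/2^n)$, which is illuminating — it is exactly the structure that later makes the weight $1/(\pi\sqrt{1-x^2})$ belong to every class ${\bf W}_n$ — but it costs two auxiliary inductions. A shorter argument reads the recursion $x_{n+1}=\sqrt{(1+x_n)/2}$ backwards: squaring it gives
\begin{equation*}
P_2(x_{n+1})=2x_{n+1}^2-1=x_n,
\end{equation*}
so each $x_{n+1}$ is a preimage of $x_n$ under $P_2$. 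Since $P_{2^p}$ is the $p$-fold composite of $P_2$, iterating this identity $p$ times yields $P_{2^p}(x_p)=x_0=-1$ and $P_{2^p}(x_{p+1})=x_1=0$, while $P_2(1)=1$ immediately gives $P_{2^p}(1)=1$. Both arguments are complete; yours buys the global trigonometric picture (from which, e.g., the monotonicity of $P_{2^p}$ on $[x_p,1]$ asserted right after the theorem also follows at once), while the algebraic one uses nothing beyond the definitions.
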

On the interval $[x_{p},\,\,1]$ the 
polynomial $P_{2^p}(x)$ is an increasing function.
\begin{definition} Let
\begin{equation*}
S_{0}:\left[x_{1},1\right]\rightarrow \left[x_{0},x_{1}\right],
\end{equation*}
and for each $x\in\left[x_{1},1\right]$,
\begin{equation*}
S_{0}(x)=-x.
\end{equation*}
For a natural $n$ let us denote by:
\begin{equation*}
S_{n}:\left[x_{n+1},1\right]\rightarrow \left[x_{n},x_{n+1}\right].
\end{equation*}
the one-to-one mapping, such that:
\begin{equation*}
P_{2^n}\left(x\right)=-P_{2^n}\left(S_{n}(x)\right),\,\,\,x_{n+1}\leq x\leq 1.
\end{equation*}
\end{definition}

	We have:
\begin{equation*}
S_{n}(x_{n+1})=x_{n+1},\quad S_{n}(1)=x_{n},\,\,\, n=1,2,\dots
\end{equation*}
\begin{figure}[ht]
\centering 
\includegraphics[height=90mm]{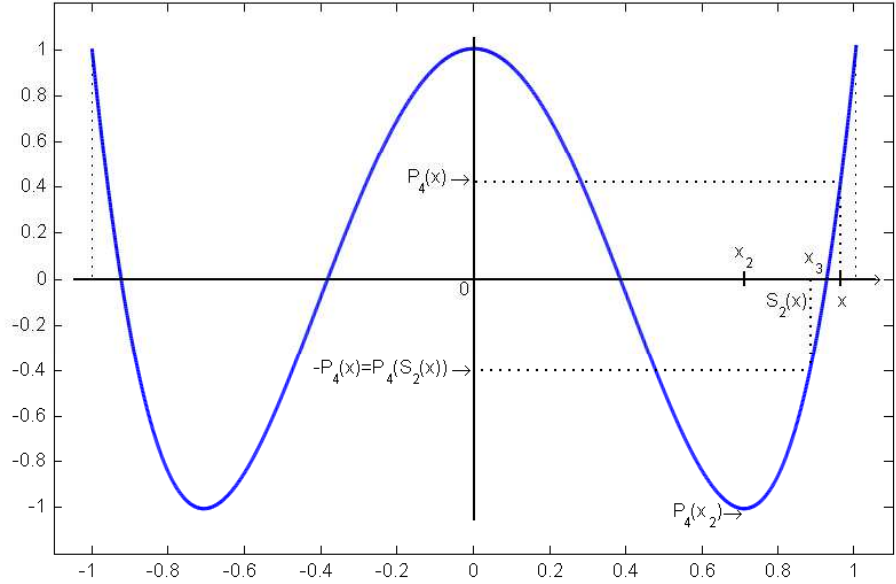}
\caption{The mapping $S_2$}
\end{figure}
\begin{example}
We have:
\begin{equation*}
S_1(y)=-y,\quad \frac{dS_1(y)}{dy}=-1,\quad x_2<y<1,
\end{equation*}
\begin{equation*}
S_2(y)=\sqrt{1-y^2},\quad \frac{dS_2(y)}{dy}=-\frac{y}{\sqrt{1-y^2}},\quad x_3<y<1,
\end{equation*}
\begin{equation*}
S_3(y)=\frac{y+\sqrt{1-y^2}}{\sqrt{2}},\quad \frac{dS_3(y)}{dy}=-\frac{y-\sqrt{1-y^2}}{\sqrt{2(1-y^2)}},\quad x_4<y<1.
\end{equation*}
\end{example}
\begin{definition}\label{d.definer} For an arbitrary integral number $0\leq n$ and a function $f(x)$ defined on the interval
$x_{n}\leq x\leq 1,$ let us denote by $R_n(f)$ the function defined on the interval $x_{n+1}<y<1$ by:
\begin{equation*}
R_n(f)(y)=\frac{f\left(S_n(y)\right)+f(y)}{2}.
\end{equation*}
\end{definition}
\begin{remark}
If the function $f$ is constant on the interval $[-1,1]$ then for an arbitrary $0\leq n$ we have:
\begin{equation*}
R_n(f)(y)=f(y),\quad \text{on}\quad x_{n+1}<y<1.
\end{equation*}
\end{remark}
\begin{remark}
For any of the polynomials $P_k(x),\,\,\,k=0,1,2,\dots$ which we constructed,
we have:
\begin{equation*}
R_0(P_k)(x)=P_k(x),\quad k=0(mod 2),
\end{equation*}
and
\begin{equation*}
R_0(P_k)(x)=0,\quad k=1(mod 2).
\end{equation*}
\end{remark}
\begin{remark}\label{r.ractsonq}
For any of the polynomials $P_{2^pk}(x),\,\,\,k,p=1,2,\dots$, which we constructed,
we have:
\begin{equation*}
R_p(P_{2^pk})(x)=P_{2^pk}(x),\quad k=0(mod 2),
\end{equation*}
and
\begin{equation*}
R_p(P_{2^pk})(x)=0,\quad k=1(mod 2).
\end{equation*}
\end{remark}
\begin{definition} For natural $n$ we'll denote by ${\bf W}_n$ the family of nonnegative functions  $\rho(x),\,\,\,-1<x<1$  satisfying the following conditions:
\begin{equation*}
\int_{-1}^1\rho(x)dx=1
\end{equation*}
and
\begin{equation*}
\rho(y)=-\rho(S_k(y))\frac{dS_k(y)}{dy},\,\,x_{k}<y\leq 1,
\end{equation*}
for $k=0,1,2,\dots,n-1$. 
\end{definition}
\begin{remark}\label{r.largeclass}
	Let us note that for an arbitrary weight function $\rho(x)\in {\bf W}_n$ we have:
\begin{equation}
\int_{x_n}^1\rho(y)dy=2^{-n}.
\end{equation}
Moreover, an arbitrary nonnegative function defined on $[x_n,\,1]$ satisfying the condition (3.14), 
coincides with a weight function from ${\bf W}_n$ on $[x_n,\,1]$.
\end{remark}
\begin{example}\label{example.even}
Note, that:
\begin{equation*}
S_0(y)=-y,\quad x_1\leq y\leq 1,
\end{equation*}
therefore for the weight function $\rho\in {\bf W}_1$ we get the condition:
\begin{equation*}
\rho(y)=\rho(-y),\,\,x_{0}<y\leq 1.
\end{equation*}
\end{example}
\begin{example}\label{example1}
We have:
\begin{equation*}
S_1(y)=\sqrt{1-y^2},\quad x_2\leq y\leq 1,
\end{equation*}
therefore for the weight function $\rho\in {\bf W}_2$ we get the conditions:
\begin{equation*}
\rho(y)=\rho(-y),\,\,x_{0}=-1<y\leq 1,
\end{equation*}
and
\begin{equation*}
\rho(y)=\rho\left(\sqrt{1-y^2}\right)\frac{y}{\sqrt{1-y^2}},\,\,x_{1}<y\leq 1,
\end{equation*}
	In particular, the function:
\begin{equation*}
\rho(y)=\frac{1}{\pi\sqrt{1-y^2}}
\end{equation*}
satisfies these conditions.
\end{example}
\begin{example}\label{example2}
We have:
\begin{equation*}
S_2(y)=\frac{y+\sqrt{1-y^2}}{\sqrt{2}},\quad x_3\leq y\leq 1,
\end{equation*}
therefore for the weight function $\rho\in {\bf W}_3$ we get the conditions:
\begin{equation*}
\rho(y)=\rho(-y),\,\,x_{0}<y\leq 1,
\end{equation*}
and
\begin{equation*}
\rho(y)=\rho\left(\sqrt{1-y^2}\right)\frac{y}{\sqrt{1-y^2}},\,\,x_{1}<y\leq 1,
\end{equation*}
and
\begin{equation*}
\rho(y)=\rho\left(\frac{y+\sqrt{1-y^2}}{\sqrt{2}}\right)\frac{y-\sqrt{1-y^2}}{\sqrt{2(1-y^2)}},
\quad x_2<y<1.
\end{equation*}

	In particular, the following function:
\begin{equation*}
\rho(y)=\frac{1}{\pi\sqrt{1-y^2}}
\end{equation*}
satisfies all the three conditions.
\end{example}
\begin{remark}\label{r.properinclusion}
By a reasoning similar to \eqref{example1} and \eqref{example2} one can prove that:
\begin{equation*}
\rho(y)=\frac{1}{\pi\sqrt{1-y^2}}\in {\bf W}_n \;\text{for}\; n=1,2,\dots.
\end{equation*}
\end{remark}
\begin{remark}\label{r.comparison} The class ${\bf W}_n$ coincides with the family of non-negative 
functions $\rho(x)$ whose integral is equal to $1$ and who permit the following representation: 
\begin{equation*}
\rho(x)=\frac{w(x)}{\sqrt{1-x^2}},
\end{equation*}
where $w(x)$ satisfies the conditions:
\begin{equation*}
w(x)=w(S_k(x)),\quad\text{for all}~ k=0,1,\dots,n-1, ~\text{and}~  x_{k+1}<x\leq 1.
\end{equation*}
\end{remark}
\section{\textbf{Main Results}}
\begin{theorem}\label{t.equality}
Let $n\geq 1$ be a natural number. Let $\rho(x)\in {\bf W}_n$. 
Then for an arbitrary function $f(x)$ we have:
\begin{equation*}
\int_{-1}^1f(x)\rho(x)dx=2^n\int_{x_n}^1 (R_{n-1}\circ\dots\circ R_0)(f)(x)\cdot\rho(x)dx.
\end{equation*}
\end{theorem}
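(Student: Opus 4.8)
The plan is to prove the identity by reducing it to a single-step reduction formula and then iterating. Concretely, I would first establish the following one-step lemma: for every index $0\le k\le n-1$ and every function $g$ defined on $[x_k,1]$,
\[
\int_{x_k}^1 g(x)\rho(x)\,dx = 2\int_{x_{k+1}}^1 R_k(g)(y)\rho(y)\,dy .
\]
Granting this, the theorem follows immediately: apply it with $k=0$ and $g=f$ (so that $x_0=-1$), then with $k=1$ and $g=R_0(f)$, and so on up to $k=n-1$. Each application strips off one more operator $R_k$, multiplies by a factor of $2$, and advances the left endpoint from $x_k$ to $x_{k+1}$; after $n$ steps the prefactor is $2^n$, the lower limit is $x_n$, and the integrand is $(R_{n-1}\circ\cdots\circ R_0)(f)\cdot\rho$. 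The bookkeeping here matches the domains exactly: since $R_k$ sends a function defined on $[x_k,1]$ to one defined on $[x_{k+1},1]$, the composite $R_{n-1}\circ\cdots\circ R_0$ is defined precisely on $[x_n,1]$, which is the region of integration on the right-hand side.

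The heart of the matter is thus the one-step lemma, and its proof is a change of variables. I would split the left integral as $\int_{x_k}^1=\int_{x_k}^{x_{k+1}}+\int_{x_{k+1}}^1$ and treat the first piece by substituting $x=S_k(y)$. Since $S_k$ is a one-to-one map of $[x_{k+1},1]$ onto $[x_k,x_{k+1}]$ with $S_k(x_{k+1})=x_{k+1}$ and $S_k(1)=x_k$, it is orientation-reversing, so the endpoints get swapped and a sign appears; after using $dx=\frac{dS_k}{dy}\,dy$ this converts $\int_{x_k}^{x_{k+1}}g(x)\rho(x)\,dx$ into $-\int_{x_{k+1}}^1 g(S_k(y))\,\rho(S_k(y))\,\frac{dS_k}{dy}\,dy$. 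At this point I invoke the defining relation of ${\bf W}_n$, namely $\rho(y)=-\rho(S_k(y))\frac{dS_k}{dy}$, which is available for exactly this range of $k$, to replace $-\rho(S_k(y))\frac{dS_k}{dy}$ by $\rho(y)$. The first piece then becomes $\int_{x_{k+1}}^1 g(S_k(y))\rho(y)\,dy$, and adding the untouched second piece $\int_{x_{k+1}}^1 g(y)\rho(y)\,dy$ gives $\int_{x_{k+1}}^1\big[g(S_k(y))+g(y)\big]\rho(y)\,dy=2\int_{x_{k+1}}^1 R_k(g)(y)\rho(y)\,dy$, by the definition of $R_k$.

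The only delicate point — where I expect to have to be careful rather than clever — is the orientation and limit tracking in the substitution: $S_k$ is decreasing, so the sign produced by the swapped limits is precisely what cancels the sign built into the ${\bf W}_n$ relation, and one must confirm that the relation is being invoked only for legitimate indices $0\le k\le n-1$, which is guaranteed throughout the iteration. For $k=0$ it is worth checking the base case directly, since there $S_0(y)=-y$, $x_0=-1$, $x_1=0$, and the relation reduces to the evenness $\rho(y)=\rho(-y)$; the substitution then simply folds $[-1,0]$ onto $[0,1]$ and reproduces the one-step formula, so no special treatment is actually needed. With the one-step lemma in hand, the induction on $k$ is routine and completes the proof.
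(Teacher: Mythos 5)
Your proposal is correct and follows essentially the same route as the paper: the paper's proof performs exactly your one-step reduction (split at $x_{k+1}$, substitute $x=S_k(y)$, and use the relation $\rho(y)=-\rho(S_k(y))\frac{dS_k(y)}{dy}$ to fold the lower piece onto $[x_{k+1},1]$), carrying it out explicitly for $n=1$ and $n=2$ and declaring the remaining cases analogous. Your only difference is organizational — you isolate the general one-step lemma and iterate it for all $k$, which makes rigorous the induction the paper leaves implicit.
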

\begin{proof} If $\rho(x)\in {\bf W}_1$ we have $\rho(y)=\rho(-y)$ so,
\begin{equation*}
\int_{-1}^1f(x)\rho(x)dx=\int_{-1}^0f(x)\rho(x)dx+\int_{0}^1f(x)\rho(x)dx=
\end{equation*}
\begin{equation*}
=\int_{0}^{1}f(x)\rho(x)dx-\int_{0}^{1}f\left(S_0(y)\right)\rho\left(S_0(y)\right)\frac{dS_0(y)}{dy}dy=
\end{equation*}
\begin{equation*}
=\int_{0}^{1}(f(x)+f(-x))\rho(x)dx=2\int_{x_1}^{1}R_0(f)(y)\rho(y)dy.
\end{equation*}

	If $\rho(x)\in {\bf W}_2$ then we have $\rho(y)=\rho(-y)$ and
\begin{equation*}
\rho(y)=-\rho\left(S_1(y)\right)\frac{dS_1(y)}{dy},\,\,x_2\leq y\leq 1.
\end{equation*}
Thus, for an arbitrary function $f(x)$ we have:
\begin{equation*}
\int_{-1}^1f(x)\rho(x)dx=2\int_{x_1}^1 R_0(f)(x)\rho(x)dx=2\int_{x_1}^{x_2}R_0(f)(x)\rho(x)dx+2\int_{x_2}^{1}R_0(f)(x)\rho(x)dx=
\end{equation*}
\begin{equation*}
=2\int_{x_2}^{1}R_0(f)(x)\rho(x)dx-2\int_{x_2}^{1}R_0(f)\left(S_1(y)\right)\rho\left(S_1(y)\right)\frac{dS_1(y)}{dy}dy=
\end{equation*}
\begin{equation*}
=2\int_{x_2}^{1}\left(R_0(f)\left(S_1(y)\right)+R_0(f)(y)\right)\rho(y)dy=2^2\int_{x_2}^{1}(R_1\circ R_0)(f)(y)\rho(y)dy.
\end{equation*}

	We prove the theorem for the other values of $n$ in an analogous way.
\end{proof}
\begin{theorem}\label{t.main} Let $\rho(x)\in {\bf W}_n$. Let 
$X=\lbrace{t_j \rbrace}_{j=1}^m$ be a certain set of points, for whom:
\begin{equation*}
-1<t_1<t_2<\dots<t_m<1.
\end{equation*}
Let $X\cap \left[x_{n},1\right]\neq \emptyset $ and
\begin{equation*}
S_k \left(X\cap \left[x_{k+1},1\right]\right)= X\cap \left[x_{k},x_{k+1}\right],\quad \text{for all} \quad k=0,1,2,\dots,n-1.
\end{equation*}
Then for an arbitrary polynomial $P(x)\in Pol(2^n-1)$ we have:
\begin{equation*}
\int_{-1}^1P(x)\rho(x)dx=\frac{1}{m}\sum_{k=1}^mP(t_k).
\end{equation*}
\end{theorem}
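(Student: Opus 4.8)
The plan is to use linearity to reduce the assertion to the distinguished basis built in Definition~\eqref{d.polynomial}. The polynomials $P_0,P_1,\dots,P_{2^n-1}$ have the distinct degrees $0,1,\dots,2^n-1$, so they form a basis of $Pol(2^n-1)$, and since both sides of the claimed identity are linear in $P$, it is enough to prove for each $0\le j\le 2^n-1$ that $\int_{-1}^1 P_j\rho\,dx=\frac1m\sum_{k=1}^m P_j(t_k)$. I will show that both sides equal $1$ when $j=0$ and equal $0$ when $1\le j\le 2^n-1$. The case $j=0$ is immediate: $P_0\equiv1$, so the left side is $\int_{-1}^1\rho=1$ and the right side is $\frac1m\sum_k 1=1$.

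For the integral when $1\le j\le 2^n-1$, I would write $j=2^pq$ with $q$ odd, so that $p\le n-1$, and apply Theorem~\eqref{t.equality} to $f=P_j$. By the action of the folding operators recorded in Remark~\eqref{r.ractsonq}, $R_k(P_j)=P_j$ for every $k<p$ (because $j=2^k\cdot 2^{p-k}q$ with $2^{p-k}q$ even) while $R_p(P_j)=0$; propagating the zero through the remaining operators gives $(R_{n-1}\circ\cdots\circ R_0)(P_j)\equiv 0$, and Theorem~\eqref{t.equality} then yields $\int_{-1}^1 P_j\rho\,dx=0$.

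The real content is the node sum, for which I would set up the exact discrete counterpart of the folding in Theorem~\eqref{t.equality}. Splitting $X\cap[x_k,1]$ at the point $x_{k+1}$ and using that $S_k$ carries $X\cap[x_{k+1},1]$ bijectively onto $X\cap[x_k,x_{k+1}]$, one gets, for any $g$ defined on $[x_k,1]$,
\[
\sum_{t\in X\cap[x_k,1]}g(t)=2\sum_{t\in X\cap[x_{k+1},1]}R_k(g)(t)-\varepsilon_k\,g(x_{k+1}),
\]
where $\varepsilon_k=1$ if $x_{k+1}\in X$ and $\varepsilon_k=0$ otherwise; the extra term, absent from the continuous statement, is forced by the fact that $x_{k+1}$ is the common fixed point $S_k(x_{k+1})=x_{k+1}$ and hence belongs to both subintervals. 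Iterating this over the levels $k=0,\dots,p-1$, on which $R_k(P_j)=P_j$, and folding once more at level $p$, where $R_p(P_j)\equiv0$, collapses $\sum_{k=1}^m P_j(t_k)$ to $0$ together with the accumulated correction terms.

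The main obstacle is exactly the control of these fixed-point corrections. I would dispose of them in two moves. First, I would show that none of $x_1,\dots,x_{n-1}$ can be a node: if $x_i\in X$ with $1\le i\le n-1$, the hypothesis at level $i$ forces $x_i=S_i(y)$ for some node $y\in X\cap[x_{i+1},1]$, but $S_i$ is injective with $S_i(1)=x_i$, so $y=1\notin(-1,1)$, a contradiction; this annihilates every correction at levels $k<p$, since there $x_{k+1}\in\{x_1,\dots,x_{n-1}\}$. Second, the one possibly surviving correction, at level $p$, is $\varepsilon_p\,P_j(x_{p+1})$, and it vanishes because $P_j(x_{p+1})=\bigl(P_{2^p}(x_{p+1})\bigr)^q=0$, the value $P_{2^p}(x_{p+1})=0$ being known. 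Hence $\sum_{k=1}^m P_j(t_k)=0$ for $1\le j\le 2^n-1$, which matches the integral and, with the case $j=0$, completes the proof.
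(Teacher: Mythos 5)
Your proof is correct, and while it runs on the same machinery as the paper's (the folding operators $R_k$, Remark \eqref{r.ractsonq}, Theorem \eqref{t.equality}), the execution is genuinely different. The paper works with a general $P\in Pol(2^n-1)$: it notes that $(R_{n-1}\circ\dots\circ R_0)(P)$ is constant, identifies that constant with $\int_{-1}^1 P\rho\,dx$ via Theorem \eqref{t.equality} and Remark \eqref{r.largeclass}, and then unfolds the average of this constant over $X\cap[x_n,1]$ back into $\frac{1}{m}\sum_k P(t_k)$ --- writing this unfolding out only for $n=1$ and declaring the general case analogous. You instead reduce by linearity to the basis $P_0,\dots,P_{2^n-1}$, show each side vanishes on $P_j$ for $1\le j\le 2^n-1$ (and equals $1$ on $P_0$), and fold the node sum only down to level $p$, where $R_p(P_j)=0$ kills it. What your route buys is precisely the point the paper's ``analogous way'' glosses over: the sets $X\cap[x_k,x_{k+1}]$ and $X\cap[x_{k+1},1]$ can overlap at the common fixed point $S_k(x_{k+1})=x_{k+1}$, so the discrete folding identity carries a correction term. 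Your two observations --- that the hypotheses force $x_1,\dots,x_{n-1}\notin X$ (the only $S_i$-preimage of $x_i$ is $1$, which cannot be a node), and that the one surviving correction dies because $P_j(x_{p+1})=\bigl(P_{2^p}(x_{p+1})\bigr)^q=0$ --- are exactly what makes the discrete step rigorous, and the first is a small lemma that appears nowhere in the paper (there the overlap is instead harmless because the fully folded polynomial is constant, a fact the paper leaves implicit). Two minor touch-ups: the paper states $P_{2^p}(x_{p+1})=0$ only for $p\ge 1$, so for odd $j$ (i.e.\ $p=0$) you should note directly that $P_j(x_1)=x_1^j=0$; and when $p=n-1$ and $x_n\in X$, the term $R_{n-1}(P_j)(x_n)$ in your folded sum sits at the endpoint of the interval on which Remark \eqref{r.ractsonq} is stated, so evaluate it directly from Definition \eqref{d.definer}: $R_{n-1}(P_j)(x_n)=P_j(x_n)=0$, the same vanishing you already invoke.
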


\begin{proof}

Remark \eqref{r.ractsonq} implies that:
\begin{equation*}
\left(R_{n-1}\circ \dots \circ R_0\right)P_k= 0, \quad \text{for} \quad k=1,\dots, 2^n-1.
\end{equation*}
So, for each polynomial $P(x)\in Pol\left(2^n-1\right)$ we have
$\left(R_{n-1}\circ \dots \circ R_0\right)P= const $.

Thus, by theorem \eqref{t.equality} we have:
\begin{equation*}
\int_{-1}^1P(x)\rho(x)dx=(R_{n-1}\circ\dots\circ R_0)(P)(t)\cdot 2^n\int_{x_n}^1 \rho(x)dx,
\end{equation*}
for any $x_n\leq t<1$. Further, by \eqref{r.largeclass} we get:
\begin{equation*}
\int_{-1}^1P(x)\rho(x)dx=(R_{n-1}\circ\dots\circ R_0)(P)(t),
\end{equation*}
for any $x_n\leq t<1$. This is why,
\begin{equation*}
\int_{-1}^1P(x)\rho(x)dx=\frac{1}{|X\cap \left[x_{n},1\right]|}\sum_{x_n\leq t_j< 1}(R_{n-1}\circ\dots\circ R_0)(P)(t_j).
\end{equation*}
At this moment let's consider the particular case $n=1$. From \eqref{d.definer} we have:
\begin{equation*}
\frac{1}{|X\cap \left[0,1\right]|}\sum_{0\leq t_j< 1}R_0(P)(t_j)=\frac{1}{2|X\cap \left[0,1\right]|}\sum_{0\leq t_j< 1}\left(P(t_j)+P(-t_j)\right)=\frac{1}{m}\sum_{k=1}^mP(t_k).
\end{equation*}
 For the other values of $n$ the equality:
\begin{equation*}
\frac{1}{|X\cap \left[x_{n},1\right]|}\sum_{x_n\leq t_j< 1}(R_{n-1}\circ\dots\circ R_0)(P)(t_j)=\frac{1}{m}\sum_{k=1}^mP(t_k).
\end{equation*}
is proved in an analogous way.

	\end{proof}
\begin{remark}\label{r.main}
Theorem  \eqref{t.main}  implies that for any $\rho\in \bf W_n$ we have:
\begin{equation*}
m=|X|\geq 2^{n-1}.
\end{equation*}
If we additionally impose that:
\begin{equation*}
t_m=x_n,
\end{equation*}
then we get that $|X|=m=2^{n-1}$ and the quadrature formula 
is valid for an arbitrary poynomial $P(x)\in Pol\left(2^n-1\right)$. Hence, 
\begin{equation*}
M_{2^{n-1}}(\rho)\geq 2^n-1
\end{equation*}

or 
\begin{equation*}
M_m(\rho)\geq 2m-1,\quad \text{for}\quad m=2^{n-1}.
\end{equation*}

	But taking into account the remark \eqref{r.nottoobig2}, we get that:
	 \begin{equation*}
M_m(\rho)= 2m-1,\quad \text{for}\quad m=2^{n-1}.
\end{equation*}
\end{remark}
\begin{remark}\label{r.main2}
Remark \eqref{r.main} shows that we found a Chebyshev - type quadrature formula for  $\rho\in \bf W_n$  of the highest possible degree $2^n-1$, where the number of nodes is $m=2^{n-1}$. 
\end{remark}


\end{document}